\def\u #1 #2{\mathcal U(#1, #2)}  
\def\uhat #1 #2{\widehat{\mathcal U}(#1, #2)}  
\def\bmw #1{W_{#1}}  
\def\w #1 #2{\bmw {#1}^{(#2)}}  
\def\V #1 #2{V_{#1}^{(#2)}}  
\def\k #1 #2{ KT_{#1}^{(#2)}}
\def\hods{\unskip\kern.55em\ignorespaces}
\newtheorem{theorem}{Theorem}[section]
\newtheorem{lemma}{Lemma}[section]
\begin{document}

\title{Affine cellularity of affine Birman-Murakami-Wenzl algebras}
\author{Weideng Cui }
\date{}
\maketitle \abstract{We show that the affine BMW algebras are affine cellular algebras.}\\

\thanks{{Keywords}: Affine BMW algebras; Extended affine Hecke algebras; Affine cellular bases; Affine cellular algebras} \large

\medskip
\section{Introduction}
The origin of the Birman-Murakami-Wenzl (BMW) algebras may be traced back to the important problem in knot theory of classifying knots (and links) up to isotopy. Shortly after the invention of the Jones link invariant in [J], Kauffman introduced a new invariant of regular isotopy for links in $S^{3},$ determined by certain skein relations [K]. Birman and Wenzl [BW] and independently Murakami [Mu] then defined a family of algebras, which were quotients of braid group algebras, and from which Kauffman's invariant could be recovered. These algebras, which were known as BMW algebras, were defined by generators and relations, but were implicitly modeled on certain algebras of tangles, whose definition was subsequently made explicitly by Morton and Traczyk [MT], as follows: let $S$ be a commutative unitary ring with invertible elements $\rho, q,$ and $\delta_{0}$ satisfying $\rho^{-1}-\rho=(q^{-1}-q)(\delta_{0}-1).$ The Kauffman tangle algebra $KT_{n, S}$ is the $S$-algebra of framed $(n, n)$-tangles in the disc cross the interval, modulo the following Kauffman skein relations:\vskip2mm

(1) Crossing relation:
 
(2) Untwisting relation:
 
(3) Free loop relation:  

\vskip2mm
Morton and Traczyk [MT] showed that the $n$-strand algebra $KT_{n, S}$ is free of rank $(2n-1)!!$ as a module over $S.$ In [MW], Morton and Wassermann have constructed an explicit basis of the BMW algebras and using it they have proved that the BMW algebras and the Kauffman tangle algebras are isomorphic. They are closely connected with the Artin braid group of type $A,$ Iwahori-Hecke algebras of type $A.$

In view of these relationships between the BMW algebras and objects of type $A,$ it is natural to generalize the BMW algebras for other types of Artin groups. Motivated by knot theory associated with the Artin group of type $B,$ H\"{a}ring-Oldenburg [HO] introduced the cyclotomic BMW algebras $W_{n, S,r}$ as a generalization of the BMW algebras such that the Ariki-Koike algebra $H_{n, S, r}$ (also known as the cyclotomic Hecke algebra of type $G(r,1,n)$) arises as a quotient of $W_{n, S,r},$ in the same way the Iwahori-Hecke algebra of type $A$ is a quotient of the BMW algebra. They are obtained from the BMW algebras by adding an extra generator $y_{1}$ satisfying a monic polynomial relation of finite order $r$ and imposing further relations modeled on type $B$ knot theory. When $r=1,$ one retrieves the original BMW algbras.

The cellularity of the BMW algebras has been shown by Xi [Xi] using the basis in [MW] (see also [E1]). It has been shown in [GM] that if $S$ is an integral domain and admissible in the sense of [WY1], then $W_{n, S,r}$ is a free $S$-module of rank $r^{n}(2n-1)!!,$ and is isomorphic to a cyclotomic version of the Kauffman tangle algebra. In [RX], Rui-Xu have also shown that $W_{n, S,r}$ is a free $S$-module under $\mathbf{u}$-admissible conditions on the parameters of $S.$ Goodman [G1] and Rui-Xu [RX] have obtained the weak cellularity results on $W_{n, S,r}$ over an admissible (a $\mathbf{u}$-admissible) ring respectively, Rui-Xu have obtained additional representation theoretic results, for example, classification of simple modules over a field. It is proved in [G2] that admissible conditions in [WY1] and $\mathbf{u}$-admissible condition in [RX] are equivalent for integral ground rings $S$ with $q-q^{-1}\neq 0.$ Yu [Yu] (see also [WY2-3]) has also shown that $W_{n, S,r}$ over an admissible ground ring $S$ is a free $S$-module of rank $r^{n}(2n-1)!!,$ is isomorphic to a cyclotomic version of the Kauffman tangle algebra, and is cellular in the sense of Graham and Lehrer.

It is also natural to ``affine" the BMW algebras to obtain BMW analogues of the extended affine Hecke algebras of type $A.$ On the one hand, H\"{a}ring-Oldenburg [HO] defined the affine BMW algebras algebraically by generators and relations; on the other hand, Goodman and Hauschild [GH] arrived at the definition of the affine Kauffman tangle algebras geometrically as algebras of framed $(n, n)$-tangles in the solid torus, modulo Kauffman skein relations. In [GH], They have shown that the two versions are isomorphic. Rui [Ru] has classifies the finite dimensional irreducible modules for affine BMW algebras over an algebraically closed field with arbitrary characteristic using the work of Goodman [G3].

Recently, Koenig and Xi in [KX2] has generalized this concept to algebras over a noetherian domain $k$ of not necessarily finite dimension, by introducing  the notion of an affine cellular algebra. The most important class of examples of affine cellular algebras, which has been discussed in [KX2], is given by the extended affine Hecke algebras of type $A.$ Recently, Guilhot and Miemietz have proved that affine Hecke algebras of rank two with generic parameters are affine cellular in [GuM]; Kleshchev and Loubert have proved that KLR algebras of finite type are affine cellular in [KL]. In [C1] and [C2], we have shown that the BLN-algebras which have been introduced by McGerty in [M], and the affine $q$-Schur algebras which have been defined by Lusztig in [L], are affine cellular algebras. In [C3], we have shown that the affine Brauer algebra, which has been introduced in [HO] (see also [GH]), is also affine cellular.

In this paper, we show that the techniques of [G1] can be modified to yield an affine cellular basis of the affine BMW algebras, thus we show that the affine BMW algebras are affine cellular algebras. Our proof relies heavily on the basis of the affine BMW algebras which has been constructed in [G1] and the fact that the extended affine Hecke algebras of type $A$ are affine cellular algebras, which has been shown in [KX2].

The organization of this paper is as follows. In Section 2, we will recall many notions, especially the affine BMW algebras $\widehat{W}_{n, S}$ and the extended affine Hecke algebras $\widehat{H}_{n, S}(q)$ of type $A,$ and define the affine cellular bases. In Section 3, we will recall a construction of some new basis of $\widehat{W}_{n, S}$ which has been given by Goodman. In Section 4, we prove our main result Theorem 4.1.

\section{Preliminaries}
\subsection{Definitions}
In the following, let $S$ be a commutative unitary ring containing elements $\rho, q$ and $\delta_{j}, j\geq 0$ with $\rho, q$ invertible, satisfying the relation $\rho^{-1}-\rho=(q^{-1}-q)(\delta_{0}-1).$\vskip2mm

\hspace*{-0.5cm}$\mathbf{Definition~2.1.}$ The affine Kauffman tangle algebra $\widehat{KT}_{n, S}$ is the $S$-algebra of framed $(n, n)$-tangles in the annulus cross the internal, modulo Kauffman skein relations, namely the crossing relation and untwisting relation, as given in the introduction, and the free loop relations: for $j\geq 0,$ $T\cup \Theta_{j}=\rho^{-j}\delta_{j}T,$ where $T\cup \Theta_{j}$ is the union of an affine tangle $T$ and a disjoint copy of the closed curve $\Theta_{j}$ that wraps $j$ times around the hole in the annulus cross the interval.

Affine tangles can be represented by affine tangle diagrams. These are pieces of link diagrams in the rectangle $\mathcal{R},$ with some number of endpoints of curves on the top and bottom boundaries of $\mathcal{R},$ and a distinguished vertical segment representing the hole in the annulus cross interval (we call this curve the flagpole). Affine tangle diagrams are regarded as equivalent if they are regularly isotopic; see [GM] for details. An affine $(n, n)$-tangle diagram is one with $n$ vertices (endpoints of curves) on the top, and $n$ vertices on the bottom edge of $\mathcal{R}.$ We label the vertices on the top edge from left to right as $\mathbf{1}, \mathbf{2},\ldots, \bm{n}$ and those on the bottom edge from left to right as $\overline{\mathbf{1}}, \overline{\mathbf{2}},\ldots, \overline{\bm{n}}.$ We order the vertices by $\mathbf{1}< \mathbf{2}<\cdots< \bm{n}<\overline{\bm{n}}<\cdots<\overline{\mathbf{2}}<\overline{\mathbf{1}}.$\vskip2mm

\hspace*{-0.5cm}$\mathbf{Definition~2.2.}$ The affine Birman-Murakami-Wenzl algebra $\widehat{W}_{n, S}$ is the $S$-algebra with generators $y_{1}^{\pm 1}, g_{i}^{\pm 1}$ and $e_{i}$ ($1\leq i\leq n-1$) and relations:\vskip2mm

(1) (Inverses) $g_{i}g_{i}^{-1}=g_{i}^{-1}g_{i}=1$ and $y_{1}y_{1}^{-1}=y_{1}^{-1}y_{1}=1.$

(2) (Idempotent relation) $e_{i}^{2}=\delta_{0} e_{i}.$

(3) (Type $B$ braid relations)

\hspace{0.7cm}(a) $g_{i}g_{i+1}g_{i}=g_{i+1}g_{i}g_{i+1}$ and $g_{i}g_{j}=g_{j}g_{i}$ if $|i-j|\geq 2.$

\hspace{0.7cm}(b) $y_{1}g_{1}y_{1}g_{1}=g_{1}y_{1}g_{1}y_{1}$ and $y_{1}g_{j}=g_{j}y_{1}$ if $j\geq 2.$

(4) (Commutation relations)

\hspace{0.7cm}(a) $g_{i}e_{j}=e_{j}g_{i}$ and $e_{i}e_{j}=e_{j}e_{i}$ if $|i-j|\geq 2.$

\hspace{0.7cm}(b) $y_{1}e_{j}=e_{j}y_{1}$ if $j\geq 2.$

(5) (Affine tangle relations)

\hspace{0.7cm}(a) $e_{i}e_{i\pm 1}e_{i}=e_{i}.$

\hspace{0.7cm}(b) $g_{i}g_{i\pm 1}e_{i}=e_{i\pm 1}e_{i}$ and $e_{i}g_{i\pm 1}g_{i}=e_{i}e_{i\pm 1}.$

\hspace{0.7cm}(c) For $j\geq 1,$ $e_{1}y_{1}^{j}e_{1}=\delta_{j}e_{1}.$

(6) (Kauffman skein relation) $g_{i}-g_{i}^{-1}=(q^{-1}-q)(e_{i}-1).$

(7) (Untwisting relations) $g_{i}e_{i}=e_{i}g_{i}=\rho^{-1}e_{i}$ and $e_{i}g_{i\pm 1}e_{i}=\rho e_{i}.$

(8) (Unwrapping relation) $e_{1}y_{1}g_{1}y_{1}=\rho e_{1}=y_{1}g_{1}y_{1}e_{1}.$\vskip2mm

Let $X_{1}, G_{i}, E_{i}$ denote the following affine tangle diagrams:

\begin{theorem} {\rm (see [GH])}
The affine BMW algebra $\widehat{W}_{n, S}$ is isomorphic to the affine Kauffman tangle algebra $\widehat{KT}_{n, S}$ by a map $\varphi$ determined by $\varphi(g_{i})=G_{i},$ $\varphi(e_{i})=E_{i}$ and $\varphi(y_{1})=\rho X_{1}.$
\end{theorem}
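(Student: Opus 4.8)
The plan is to verify, in turn, that $\varphi$ is a well-defined $S$-algebra homomorphism, that it is surjective, and finally that it is injective; the last of these is where the real difficulty lies.

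First I would establish well-definedness. Since $\widehat{W}_{n, S}$ is presented by generators and relations, it suffices to check that the proposed images $G_i$, $E_i$ and $\rho X_1$ of $g_i$, $e_i$ and $y_1$ satisfy all the defining relations (1)--(8) inside $\widehat{KT}_{n, S}$. Each of these is a purely diagrammatic identity: one draws the tangle represented by each side and reduces it to the other using only the three Kauffman skein relations (the crossing, untwisting and free loop relations). For example, the idempotent relation $e_i^2=\delta_0 e_i$ comes from the free loop relation applied to the contractible closed curve produced by stacking two copies of $E_i$; the Kauffman skein relation (6) is literally the crossing relation for $G_i$; and the relations (5)(c) and (8), which involve $y_1$ and the flagpole, follow by isotoping a strand around the hole and invoking the free loop relations $T\cup\Theta_j=\rho^{-j}\delta_j T$. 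This is precisely the computation that forces the normalizing factor $\rho$ in $\varphi(y_1)=\rho X_1$. This step is lengthy but essentially mechanical.

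Second, for surjectivity I would argue that every affine $(n, n)$-tangle diagram is, modulo the skein relations, a product of the elementary tangles $G_i^{\pm 1}$, $E_i$ and $X_1^{\pm 1}$. Putting a diagram in generic position with respect to the vertical coordinate (a Morse-type argument) and slicing it into thin horizontal strips, each strip can be arranged to contain exactly one elementary event: a single crossing (a copy of $G_i^{\pm 1}$), a single local maximum--minimum pair (a copy of $E_i$), or a single passage of a strand around the flagpole (a copy of $X_1^{\pm 1}$, conjugated by braid generators so as to move it to the first strand). Reading the strips from top to bottom expresses the diagram as a word in the images of the generators, so $\varphi$ is onto.

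The hard part will be injectivity. Here I would compare the two algebras as free $S$-modules. On the algebraic side one first produces a spanning set of $\widehat{W}_{n, S}$ in a controlled normal form: an affine-Brauer-diagram part recording which vertices are joined by cups and caps, decorated by winding data (powers of the $y$-elements, equivalently loops around the flagpole) and by a factor coming from the extended affine Hecke algebra $\widehat{H}_{n, S}(q)$ of type $A$; a straightening argument using relations (1)--(8) shows these monomials span. One then checks that $\varphi$ carries this spanning set bijectively onto the geometric normal form for affine tangle diagrams, so that the images are not merely spanning but \emph{linearly independent}. The crux is exactly this independence statement on the Kauffman tangle side, which rests on a rank computation for $\widehat{KT}_{n, S}$ (the affine analogue of the Morton--Traczyk count), in which the free loop relations are used to reduce every closed loop winding $j$ times about the flagpole to the scalar $\rho^{-j}\delta_j$, collapsing an arbitrary diagram to the normal form without identifying distinct normal-form diagrams. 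Once the two normal forms are seen to be in bijection, any element of $\ker\varphi$, written in terms of the spanning monomials, must have vanishing coefficients because their images are independent; hence $\ker\varphi=0$ and $\varphi$ is an isomorphism.
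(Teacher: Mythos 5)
You should first note that the paper itself contains no proof of this statement: Theorem 2.1 is imported verbatim from Goodman--Hauschild [GH], so your attempt can only be measured against the argument given there. Your first two steps do match the structure of [GH]: well-definedness is the mechanical check that $G_{i}$, $E_{i}$, $\rho X_{1}$ satisfy relations (1)--(8) (and you correctly identify that the normalization $\varphi(y_{1})=\rho X_{1}$ is forced by reconciling $e_{1}y_{1}^{j}e_{1}=\delta_{j}e_{1}$ and $e_{1}y_{1}g_{1}y_{1}=\rho e_{1}$ with the geometric free loop relation $T\cup\Theta_{j}=\rho^{-j}\delta_{j}T$), and surjectivity is their generation-by-elementary-tangles argument via Morse position, the same algorithm from [GH, Propositions 2.18 and 2.19] that the present paper invokes in the proof of Lemma 4.4.

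The genuine gap is in your injectivity step. You reduce everything to linear independence of the normal-form (totally descending, winding-labeled) diagrams in $\widehat{KT}_{n, S}$, and propose to obtain it from ``a rank computation for $\widehat{KT}_{n,S}$ (the affine analogue of the Morton--Traczyk count)'' in which the reduction to normal form proceeds ``without identifying distinct normal-form diagrams.'' Two problems. First, there is no finite rank to count: the normal forms are indexed by $\mathbb{Z}$-Brauer diagrams with arbitrary winding labels, so $\widehat{KT}_{n,S}$ is of infinite rank and a Morton--Traczyk-style dimension comparison is unavailable. Second, and more seriously, the clause ``without identifying distinct normal-form diagrams'' is exactly the assertion to be proved, not a byproduct of exhibiting a straightening algorithm: reduction in a skein quotient shows only that the normal forms span, and independence requires showing the resulting coefficients are well defined, i.e., independent of the order of reductions and invariant under regular isotopy. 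Nothing in your sketch addresses this, and a direct confluence proof is notoriously delicate. The actual proof in [GH] (modeled on Morton--Wassermann [MW] in the finite case) sidesteps it: the quotient of $\widehat{W}_{n, S}$ by the ideal $I_{n}$ spanned by diagrams of rank $<n$ is the extended affine Hecke algebra $\widehat{H}_{n, S}(q)$, whose freeness on the basis $\tau_{\pi}\bm{t}^{b}$ is known independently, and the low-rank ideal is controlled by induction on $n$ via conditional expectations (closing strands), so that independence of the $n$-strand normal forms is inherited from lower levels together with the Hecke quotient. Without this tower argument, or some substitute supplying enough linear functionals or representations to separate normal forms, your proposal proves that $\varphi$ is a surjection carrying a spanning set to a spanning set, but not that it is injective.
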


Because of Theorem 2.1, we will no longer take care to distinguish between affine BMW algebras and their realizations as algebras of tangles. We identify $e_{i}$ and $g_{i}$ with the corresponding affine tangle diagrams and $x_{1}=\rho^{-1} y_{1}$ with the affine tangle diagram $X_{1}.$ The ordinary BMW algebra $W_{n, S}$ embeds in the affine BMW algebra $\widehat{W}_{n, S}$ as the subalgebra generated by the $e_{i}$'s and $g_{i}$'s.

\subsection{The rank of tangle diagrams}

An ordinary or affine tangle diagram $T$ with $n$ strands is said to have rank $\leq r$ if it can be written as a product $T=T_{1}T_{2},$ where $T_{1}$ is an ordinary or affine $(r,n)$-tangle and $T_{2}$ is an ordinary or affine $(n, r)$-tangle.

\subsection{The algebra involution $*$ on BMW algebras}
Each of the ordinary and affine BMW algebras admits a unique anti-automorphism involution $*$, which is given by $a\mapsto a^{*},$ and fixes each of the generators $g_{i}, e_{i}$ (and $x_{1}$ in the affine case). For an ordinary or affine tangle diagram $T$ representing an element of one of these algebras, $T^{*}$ is the diagram obtained by flipping $T$ around a horizontal axis.

\subsection{The extended affine Hecke algebras of type $A$}

\hspace*{-0.5cm}$\mathbf{Definition~2.3.}$ Let $S$ be a commutative unitary ring with an invertible element $q.$ The extended affine Hecke algebra $\widehat{H}_{n, S}(q)$ of type $A$ over $S$ is the $S$-algebra with generators $t_{1}, \tau_{1}, \ldots, \tau_{n-1},$ with the following relations:\vskip2mm
(1) The generators $\tau_{i}$ are invertible, satisfy the braid relations, and $\tau_{i}-\tau_{i}^{-1}=q-q^{-1}.$

(2) The generators $t_{1}$ is invertible, $t_{1}\tau_{1}t_{1}\tau_{1}=\tau_{1}t_{1}\tau_{1}t_{1}$ and $t_{1}$ commutes with $\tau_{j}$ for $j\geq 2.$\vskip2mm

If a permutation $\pi\in \mathfrak{S}_{n}$ has a reduced expression $\pi=s_{i_1}s_{i_2}\cdots s_{i_{r}}$, then $\tau_{\pi}=\tau_{i_{1}}\tau_{i_{2}}\cdots \tau_{i_{r}},$ which is independent of the choice of the reduced expressions of $\pi.$ Define elements $t_{j}$ ($1\leq j\leq n$) in the affine Hecke algebra by $t_{j}=\tau_{j-1}\tau_{j-2}\cdots \tau_{1}t_{1}\tau_{1}\cdots \tau_{j-2}\tau_{j-1}.$ It is well-known that the affine Hecke algebra $\widehat{H}_{n, S}(q)$ is a free $S$-module with basis the set of elements $\tau_{\pi}\bm{t}^{b},$ where $\pi\in \mathfrak{S}_{n}$ and $\bm{t}^{b}$ denotes a Laurent monomial in $t_{1}, t_{2},\ldots, t_{n}.$

Let $S$ be a commutative ring with appropriate parameters $\rho, q, \delta_{j}.$ There is an algebra homomorphism $p: \widehat{W}_{n, S}\rightarrow \widehat{H}_{n, S}(q)$ determined by $g_{i}\mapsto \tau_{i}, e_{i}\mapsto 0$ and $x_{1}\mapsto t_{1}.$ The kernel of $p$ is the ideal $I_{n}$ spanned by affine tangle diagrams with rank strictly less than $n.$

The extended affine Hecke algebra $\widehat{H}_{n, S}(q)$ has a unique involutive algebra anti-automorphism $*$ fixing the generators $\tau_{i}$ and $t_1.$ Then we have $\tau_{w}=\tau_{w^{-1}}$ for any $w\in W,$ where $W$ is the extended affine Weyl group of type $A.$ The quotient map $p$ respects the involutions, that is, $p(x^{*})=p(x)^{*}.$

We have a linear section $t: \widehat{H}_{n, S}(q)\rightarrow \widehat{W}_{n, S}$ of the map $p$ determined by $t(\tau_{\pi}\bm{t}^{b})=g_{\pi}\bm{x}^{b}.$ Moreover, $t(x^{*})\equiv t(x)^{*}$ mod $I_{n}$ and $t(x)t(y)\equiv t(xy)$ mod $I_{n}$ for any $x, y\in \widehat{H}_{n, S}(q).$

\subsection{Affine cellular bases}
We recall the definition of affine cellularity from [KX2]. The version of the definition given here is slightly weaker than the original definition in [KX2].

Let $k$ be a noetherian domain. For a $k$-algebra $A$, a $k$-linear anti-automorphism $i$ of $A$ satisfying $i^{2}=id_{A}$ will be called a $k$-involution on $A.$ For two $k$-modules $V$ and $W,$ we denote by $\tau$ the map $V\otimes W\rightarrow W\otimes V$ given by $\tau(v\otimes w)=w\otimes v.$ If $B=k[x_{1},\ldots,x_{t}]/I$ for some ideal $I$ in a polynomial ring in finitely many variables $x_{1},\ldots, x_{t}$ over $k$, then $B$ is called an affine $k$-algebra.\vskip2mm

\hspace*{-0.5cm}$\mathbf{Definition~2.4.}$ Let $k$ be a noetherian domain, and let $A$ be a unitary $k$-algebra. An affine cell datum for $A$ consists of a $k$-involution $*$ on $A;$ a finite partially ordered set ($\Lambda, \geq$) and for each $\lambda\in \Lambda$ a finite set $\mathcal{T}(\lambda),$ an affine $k$-algebra $B_{\lambda}$ with a $k$-involution $\sigma_{\lambda};$ and a subset $\mathcal{C}=\{c_{s, t}^{\lambda}|~\lambda\in \Lambda ~\mathrm{and}~ s, t\in \mathcal{T}(\lambda)\}\subset A;$ with the following properties:\vskip2mm

(a) For each $\mu\in \Lambda,$ let $\widehat{A}^{\mu}$ be the right $B_{\mu}$-span of the $c_{s, t}^{\mu}$ for $s, t\in \mathcal{T}(\mu),$ then the set $\{c_{s, t}^{\mu}\}_{s, t\in \mathcal{T}(\mu)}$ is a $B_{\mu}$-basis of the right $B_{\mu}$-module $\widehat{A}^{\mu}$. Moreover, we have $A=\bigoplus_{\mu\in \Lambda} \widehat{A}^{\mu}$ (direct sums of $k$-modules).

(b) For each $\lambda\in \Lambda,$ let $\widehat{A}^{>\lambda}=\sum_{\mu>\lambda} \widehat{A}^{\mu}.$ Given $\lambda\in \Lambda,$ $s\in \mathcal{T}(\lambda),$ and $a\in A, b\in B_{\lambda},$ there exist coefficients $r_{v}^{s}(a)\in B_{\lambda}$ such that for all $t\in \mathcal{T}(\lambda),$ we have $$a\cdot (c_{s, t}^{\lambda}\cdot b)\equiv \sum_{v\in \mathcal{T}(\lambda)}c_{v, t}^{\lambda}\cdot r_{v}^{s}(a)b~~~(\mathrm{mod}~\widehat{A}^{>\lambda}),$$
where the coefficients $r_{v}^{s}(a)\in B_{\lambda}$ are independent of $t.$

(c) For all $\lambda\in \Lambda$ and $s, t\in \mathcal{T}(\lambda),$ and for any $b\in B_{\lambda},$ then we have $(c_{s, t}^{\lambda}\cdot b)^{*}\equiv c_{t, s}^{\lambda}\cdot\sigma_{\lambda}(b)$ mod $\widehat{A}^{>\lambda}.$\vskip2mm

$A$ is said to be an affine cellular algebra if it has an affine datum. For brevity, we will say that $\mathcal{C}$ is an affine cellular basis of $A.$

In [GL], they have introduced the notion of cellular algebras; in [KX1], Koenig and Xi have given an equivalent definition without using bases. In analogy to the proof of the equivalences of the two definitions, we can easily show that the definition of affine cellularity in Definition 2.4 is equivalent to the following basis-free definition of affine cellularity, which has been given by Koenig and Xi in [KX2].\vskip2mm

\hspace*{-0.5cm}$\mathbf{Definition~2.5.}$ (see [KX2, Definition 2.1]) Let $A$ be a unitary $k$-algebra with a $k$-involution $i$. A two-sided ideal $J$ in $A$ is called an affine cell ideal if and only if the following data are given and the following conditions are satisfied:\vskip2mm
$(1)$ We have $i(J)=J.$

$(2)$ There exist a free $k$-module of finite rank and an affine $k$-algebra $B$ with a $k$-involution $\sigma$ such that $\Delta :=V\otimes_{k} B$ is an $A$-$B$-bimodule, where the right $B$-module structure is induced by the right regular $B$-module $B_{B}$.

$(3)$ There is an $A$-$A$-bimodule isomorphism $\alpha :J\rightarrow \Delta\otimes_{B}\Delta',$ where $\Delta'=B\otimes_{k}V$ is a $B$-$A$-bimodule with the left $B$-module induced by the left regular $B$-module ${}_{B}B$ and with the right $A$-module structure defined by $(b\otimes v)a :=\tau(i(a)(v\otimes b))$ for $a\in A$, $b\in B$ and $v\in V$, such that the following diagram is commutative:

\[\begin{CD}
j   @>\alpha>>\Delta\otimes_{B}\Delta'\\
@ViVV                  @VVv\otimes b\otimes_{B}b'\otimes w\mapsto w\otimes \sigma(b')\otimes_{B}\sigma(b)\otimes vV\\
J         @>\alpha>>   \Delta\otimes_{B}\Delta'
\end{CD}\].

The algebra $A$ together with its $k$-involution $i$ is called affine cellular if and only if there is a chain of $i$-invariant two-sided ideals of $A$: $0=J_{0}\subset J_{1}\subset J_{2}\subset\cdots\subset J_{n}=A$, where each $J_{m}/J_{m-1}$ ($1\leq m\leq n$) is an affine cell ideal of $A/J_{m-1}$ (with respect to the involution induced by $i$ on the quotient).\vskip3mm

For an affine $k$-algebra $B$ with a $k$-involution $\sigma$, a free $k$-module $V$ of finite rank and a $k$-bilinear form $\varphi : V\otimes_{k} V\rightarrow B,$ denote by $\mathbb{A}(V,B,\varphi)$ the (possibly non-unital) algebra given as a $k$-module by $V\otimes_{k} B\otimes_{k}V,$ on which we impose the multiplication $(v_1\otimes b_1\otimes w_1)(v_2\otimes b_2\otimes w_2):$ $=v_1\otimes b_1\varphi(w_1,v_2)b_2\otimes w_2.$

For completeness, we will give the proof of the equivalences of the two definitions as follows.

\begin{proof}(1) Given Definition 2.4. Fix a maximal index $\lambda.$ Let $J(\lambda)$ be the right $B_{\lambda}$-span of the basis element $c_{s, t}^{\lambda}.$ By (b) and (c), this is a two-sided ideal and is fixed by the involution $*.$ Fix any index $t,$ define $V$ as the $k$-span of $c_{s, t}^{\lambda}$ (where $s$ varies). We define $\alpha: J(\lambda)\rightarrow \mathbb{A}(V,B_{\lambda},\varphi),$ where $\varphi$ is a $k$-bilinear form $\varphi :V\otimes_{k} V\rightarrow B_{\lambda},$ by sending $c_{u, v}^{\lambda}\cdot b$ to $c_{u, t}^{\lambda}\otimes b\otimes c_{v, t}^{\lambda}.$ It is easy to check that it is an isomorphism as an algebra and an $A$-$A$-bimodule. Thus $J(\lambda)$ is an affine cell ideal by [KX2, Proposition 2.2 and 2.3]. Continuing by induction, it follows that $A$ is affine cellular in the sense of Definition 2.5.

(2) Conversely, if an affine cell ideal $J$, in the sense of Definition 2.5, is given. We choose a set of right $B$-basis, say $\{c_{s}\}$, of $\Delta,$ and denote by $c_{s,t}\cdot b$ the inverse image under $\alpha$ of $c_{s}\otimes b\otimes_{B} 1 \otimes c_{t}.$ Since $\alpha$ is an $A$-$A$-bimodule isomorphism and $\Delta$ is an $A$-$B$-bimodule, (b) is satisfied. $(c_{s,t}\cdot b)^{*}=(c_{s,t})^{*}\cdot \sigma(b)$ follows from the required commutative diagram. This finishes the proof for those basis elements occurring in an affine cell ideal. Induction (on the length of the chain of ideals $J_{j}$) provides us with an affine cellular basis of the quotient algebra $A/J.$ Choose any preimages in $A$ of these basis elements together with a basis of $J$ as above we produce an affine cellular basis of $A.$\end{proof}

\hspace*{-0.5cm}$\mathbf{Remark ~2.1.}$ (1) The original definition in [KX2] also requires that $A$ has a $k$-module decomposition $A=J_{1}'\oplus J_{2}'\oplus\cdots J_{n}'$ (for some $n$) with $i(J_{l}')=J_{l}'$ for $1\leq l\leq n,$ which is equivalent to requiring that $(c_{s, t}^{\lambda}\cdot b)^{*}=c_{t, s}^{\lambda}\cdot\sigma_{\lambda}(b)$ for all $\lambda\in \Lambda$ and $s, t\in \mathcal{T}(\lambda),$ $b\in B_{\lambda}$ in Definition 2.4. However, we can check that the basic consequences in [KX2] remain valid with our weaker descriptions. For this reason, we have retained the terminology ``affine cellularity" for our weaker definition, rather than inventing some new terminology such as ``weak affine cellularity."

(2) In case $2\in k$ is invertible, one can easily check that our definition is equivalent to the original one.
\section{Some bases of the affine BMW algebras}

\subsection{$\mathbb{Z}$-Brauer diagrams}
We recall that a Brauer diagram is a tangle diagram in the plane, in which information about over- and under-crossings is ignored. Let $G$ be a group. A $G$-Brauer diagram (or $G$-connector) is a Brauer diagram in which each strand is endowed with an orientation and labeled by an element of the group $G.$ Two labelings are regarded as the same if the orientation of a strand is reversed and the group element associated to the strand is inverted.

Define a map $c$ from oriented affine $(n, n)$-tangle diagrams without closed loops to $\mathbb{Z}$-Brauer diagrams as follows. Let $a$ be an oriented affine $(n, n)$-tangle diagram without closed loops. If $s$ connects two vertices $\bm{v_{1}}$ to $\bm{v_{2}},$ include a curve $c(s)$ in $c(a)$ connecting the same vertices with the same orientation, and label the oriented strand $c(s)$ with the winding number of $s$ with respect to the flagpole.

The symmetric group $\mathfrak{S}_{n}$ can be regarded as the subset of $(n, n)$-Brauer diagrams consisting of diagrams with only vertical strands. $\mathfrak{S}_{n}$ acts on ordinary or $\mathbb{Z}$-labeled $(n, n)$-Brauer diagrams on the left and on the right by the usual multiplication of diagrams, that is, by stacking diagrams.

We consider a particular family of permutations in $\mathfrak{S}_{n}.$ Let $s$ be an integer, $0\leq s\leq n,$ with $s$ congruent to $n$ mod 2. Write $f=(n-s)/2.$ Following Enyang [E2], let $\mathcal{D}_{f, n}$ be the set of permutations $\pi\in \mathfrak{S}_{n}$ satisfying: \vskip2mm

(1) If $i, j$ are even numbers with $2\leq i< j\leq 2f,$ then $\pi(i)<\pi(j).$

(2) If $i$ is odd with $1\leq i\leq 2f-1,$ then $\pi(i)<\pi(i+1).$

(3) If $2f+1\leq i<j\leq n,$ then $\pi(i)<\pi(j).$\vskip2mm

Then $\mathcal{D}_{f, n}$ is a complete set of left coset representatives of $$\big((\mathbb{Z}_{2}\times \cdots \mathbb{Z}_{2})\rtimes \mathfrak{S}_{f}\big)\times \mathfrak{S}_{s}\subseteq \mathfrak{S}_{n},$$
where the $f$ copies of $\mathbb{Z}_{2}$ are generated by the transpositions $(2i-1, 2i)$ for $1\leq i\leq f;$ $\mathfrak{S}_{f}$ permutes the $f$ blocks [$2i-1, 2i$] among themselves; and $\mathfrak{S}_{s}$ acts on the last $s$ digits $\{2f+1,\ldots, n\}.$

An element $\pi$ of $\mathcal{D}_{f, n}$ factors as $\pi=\pi_{1}\pi_{2},$ where $\pi_{2}\in \mathcal{D}_{f, f},$ and $\pi_{1}$ is a $(2f, s)$ shuffle; i.e., $\pi_{1}$ preserves the order of $\{1,2,\ldots, 2f\}$ and of $\{2f+1,\ldots,2f+s=n\}.$ Moreover, we have $l(\pi)=l(\pi_{1})+l(\pi_{2}).$

For any $\mathbb{Z}$-Brauer diagram $D,$ let $D_{0}$ denote the underlying ordinary Brauer diagram; that is, $D_{0}$ is obtained from $D$ by forgetting the integer valued labels of the strands. If $D$ is a $\mathbb{Z}$-Brauer diagram with exactly $s$ vertical strands, then $D$ has a unique factorization$$D=\alpha d \beta^{-1},\eqno{(3.1)}$$
where $\alpha$ and $\beta$ are elements of $\mathcal{D}_{f, n},$ and $d$ has underlying Brauer diagram of the form $d_{0}=e_{1}e_{2}\cdots e_{2f-1}\pi,$ where $\pi$ is a permutation of $\{2f+1,\ldots, n\}.$

\subsection{Some bases}
We define the following commuting, but non-conjugate elements $$x_{j}=g_{j-1}\cdots g_{1}x_{1}g_{1}\cdots g_{j-1},~~~\mathrm{for}~1\leq j\leq n.$$

Given a $\mathbb{Z}$-Brauer diagram $D,$ we define an element $T_{D}$ whose associated $\mathbb{Z}$-Brauer diagram $c(T_{D})$ is equal to $D,$ as follows: suppose that $D$ has $2n$ vertices and $s$ vertical strands, and let $f=(n-s)/2.$ Let $D$ have the factorization $D=\alpha d \beta^{-1},$ where $\alpha, \beta\in \mathcal{D}_{f, n},$ and $d$ has underlying Brauer diagram of the form $d_{0}=e_{1}e_{3}\cdots e_{2f-1}\pi,$ with $\pi$ a permutation of $\{2f+1,\ldots, n\}.$

Define $$T_{d}=x_{1}^{a_{1}}\cdots x_{2f-1}^{a_{2f-1}}(e_{1}e_{3}\cdots e_{2f-1}g_{\pi})x_{1}^{c_{1}}x_{3}^{c_{3}}\cdots x_{2f-1}^{c_{2f-1}}x_{2f+1}^{b_{2f+1}}\cdots x_{n}^{b_{n}},\eqno{(3.2)}$$
where the exponents are determined as follows: if $d$ has a horizontal strand beginning at $\bm{i}$ with integer valued label $k,$ then $c_{i}=k;$ and $c_{i}=0$ otherwise. If $d$ has a vertical strand beginning at $\bm{i}$ with integer valued label $k,$ then $b_{i}=k;$ and $b_{i}=0$ otherwise. If $d$ has a horizontal strand ending at $\overline{\bm{i}}$ with integer valued label $k,$ then $a_{i}=k;$ and $a_{i}=0$ otherwise.

Finally, we set $T_{D}=g_{\alpha}T_{d}g_{\beta}^{*}.$ Then $c(T_{D})=D.$

Recall that $S$ is called weakly admissible if $e_{1}$ is not a torsion element. The following theorem gives a set of basis of $\widehat{W}_{n, S}$ involving monomials in the commuting elements $x_{j}.$

\begin{theorem} {\rm (see [G1, Theorem~3.17])}
Let $S$ be weakly admissible. The set $\mathbb{U}=\{T_{D}|~D~ \mathrm{is~ a }~ \mathbb{Z}\mathrm{-Brauer ~diagram}\}$ is an $S$-basis of $\widehat{W}_{n, S}.$
\end{theorem}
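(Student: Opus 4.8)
The plan is to prove that $\mathbb{U}$ is an $S$-basis by establishing spanning and linear independence separately, organizing both around the rank filtration of $\widehat{W}_{n, S}$. Since the winding labels carried by a $\mathbb{Z}$-Brauer diagram are unbounded, $\mathbb{U}$ is infinite and no finite rank count is available, so both halves must be carried out directly.

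For spanning I would first invoke Theorem 2.1 to work inside the affine Kauffman tangle algebra $\widehat{KT}_{n, S}$, where the algebra is spanned by affine $(n,n)$-tangle diagrams. Using the free loop relations one deletes all closed components, each contributing a scalar $\rho^{-j}\delta_j$, so that $\widehat{W}_{n, S}$ is spanned by loop-free affine tangle diagrams. I would then set up the filtration by the two-sided ideals $\widehat{W}_{n, S}^{(\le r)}$ spanned by tangle diagrams of rank $\le r$, noting $\widehat{W}_{n, S}^{(\le n-2)}=I_n$, and argue by downward induction on rank. Given a loop-free diagram $a$ of rank $s$ with $c(a)=D$, the point is that $a$ and $T_D$ have the same underlying $\mathbb{Z}$-Brauer diagram, hence differ only by crossing changes, framing twists and winding of strands around the flagpole. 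Resolving crossings via the Kauffman skein relation (6) rewrites the braiding of $a$ in a reduced form $g_\pi$ modulo terms carrying an extra $e_i$, which have strictly smaller rank; the untwisting and unwrapping relations (7),(8) absorb framings and twists into units, and the flagpole windings are by construction recorded by the monomials $x_j^{a_i}, x_j^{c_i}, x_j^{b_i}$ of $T_d$. Iterating yields $a\equiv u\,T_D \pmod{\widehat{W}_{n, S}^{(\le s-2)}}$ for a unit $u\in S$, and the inductive hypothesis disposes of the lower-rank remainder.

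For linear independence it suffices, since the $T_D$ respect the filtration, to show for each $s$ that the images of the $T_D$ with exactly $s$ vertical strands are linearly independent in the layer $\widehat{W}_{n, S}^{(\le s)}/\widehat{W}_{n, S}^{(\le s-2)}$ (a nontrivial global relation descends to such a relation in its top layer). The top layer $s=n$ is immediate: here $\widehat{W}_{n, S}/I_n\cong \widehat{H}_{n, S}(q)$ via $p$, the rank-$n$ diagrams $D$ correspond to pairs $(\pi,b)$ with $\pi\in\mathfrak{S}_n$ and $b\in\mathbb{Z}^n$, and $p(T_D)=\tau_\pi \bm{t}^{b}$, which form an $S$-basis of the free affine Hecke algebra. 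For a general layer I would use the factorization $D=\alpha d\beta^{-1}$ with $\alpha,\beta\in\mathcal{D}_{f,n}$ and $T_D=g_\alpha T_d g_\beta^{*}$ to present the layer as a generalized matrix module in which $\alpha,\beta$ index rows and columns drawn from $\mathcal{D}_{f,n}$, while the middle datum $d$ records an affine Hecke basis element $\tau_\pi \bm{t}^{b}$ for the $s$ through-strands together with the winding exponents on the $f$ horizontal strands; identifying $T_D$ with the corresponding basis vector then gives independence.

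The main obstacle is precisely this last step: proving that each associated graded layer $\widehat{W}_{n, S}^{(\le s)}/\widehat{W}_{n, S}^{(\le s-2)}$ is \emph{free} of the expected rank, equivalently that the winding labels on horizontal strands do not collapse. The delicate relation is $e_1 y_1^{j} e_1=\delta_j e_1$ (relation 5(c)), which governs how a winding slides through a cap--cup and produces the loop scalars $\delta_j$; one must verify that distinct winding data stay linearly independent rather than being forced together through such relations. This is exactly where weak admissibility --- the hypothesis that $e_1$ is not a torsion element --- enters, and I expect the crux of the proof to be the construction of a faithful action (or an explicit bimodule isomorphism onto the generalized matrix algebra built from $\widehat{H}_{s, S}(q)$, the winding monomials, and $\mathcal{D}_{f,n}$) certifying the non-degeneracy of each layer. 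The spanning argument, by contrast, is a comparatively routine normal-form computation.
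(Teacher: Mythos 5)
You should first be aware that the paper itself offers no proof of this statement: Theorem~3.1 is quoted wholesale from [G1, Theorem~3.17], and Goodman's actual argument there is organized differently from yours. Spanning is proved by a double induction, and --- crucially --- linear independence is \emph{not} reproved from scratch: Goodman already has at his disposal the basis of $\widehat{W}_{n,S}$ indexed by $\mathbb{Z}$-Brauer diagrams constructed in [GH] (this is where weak admissibility is actually consumed, once and for all, in the Goodman--Hauschild freeness theorem), and he shows that the transition between his new family $\mathbb{U}$ and the [GH] basis is unitriangular with respect to a suitable order on diagrams, whence $\mathbb{U}$ is a basis. Your proposal instead attempts to establish freeness of each layer $\widehat{W}_{n,S}^{(\le s)}/\widehat{W}_{n,S}^{(\le s-2)}$ directly, and at exactly that point you stop: you write that you ``expect the crux of the proof to be the construction of a faithful action \ldots certifying the non-degeneracy of each layer,'' but you never construct such an action or bimodule isomorphism. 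That is not a detail --- it is the entire content of the theorem. Producing a spanning set indexed by $\mathbb{Z}$-Brauer diagrams is comparatively routine; over an arbitrary weakly admissible ring there is no dimension count, and the danger you correctly identify (collapse of winding data through relation 5(c), $e_1y_1^je_1=\delta_je_1$) is precisely what a proof must rule out and yours does not. So the proposal has a genuine gap at its announced crux; the efficient repair is Goodman's own, namely to import the [GH] basis and run a triangularity comparison rather than prove layerwise faithfulness ab initio.

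There is also a concrete error in your spanning sketch. You claim that resolving a crossing via relation (6) leaves only correction terms of strictly smaller rank. But relation (6) gives $g_i=g_i^{-1}+(q^{-1}-q)e_i-(q^{-1}-q)$, and neither correction term need drop rank: inserting $e_i$ between strands that already belong to horizontal arcs does not reduce the number of through strands, and the identity-smoothing term $-(q^{-1}-q)\cdot 1$ carries no $e_i$ at all --- it changes the underlying Brauer diagram while possibly preserving the rank. Consequently your downward induction on rank alone does not terminate. The standard fix, used in [MW], [GH] and [G1], is a double induction on the pair (rank, number of crossings): crossing resolutions strictly decrease the number of crossings even when they preserve rank, and minimal-crossing loop-free representatives of each $\mathbb{Z}$-Brauer diagram are then brought to the normal form $T_D$ by the untwisting and unwrapping relations. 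With that correction your spanning half goes through, but the independence half remains unproven as written.
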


Let $D$ be a $\mathbb{Z}$-Brauer diagram, with $2n$ vertices and $s$ vertical strands, having factorization $D=\alpha d \beta^{-1},$ and let $T_{d}$ be defined as in (3.2) and let $T_{D}=g_{\alpha}T_{d}g_{\beta}^{*}.$ Factor $\alpha$ as $\alpha=\alpha_{1}\alpha_{2},$ with $\alpha_{1}$ a $(2f, s)$-shuffle, and $\alpha_{2}\in \mathcal{D}_{f, f},$ and factor $\beta$ similarly. Then we can rewrite $T_{D}$ as follows:$$T_{D}=g_{\alpha_{1}}\big[g_{\alpha_{2}}\bm{x}^{a}(e_{1}e_{3}\cdots e_{2f-1})\bm{x}^{c}(g_{\beta_{2}})^{*}\big]\big(g_{\pi}\bm{x}^{b}\big)(g_{\beta_{1}})^{*}, \eqno{(3.3)}$$
where $\bm{x}^{a}$ is short for $x_{1}^{a_{1}}\cdots x_{2f-1}^{a_{2f-1}},$ and similarly for $\bm{x}^{c},$ while $\bm{x}^{b}$ denotes $x_{2f+1}^{b_{2f+1}}\cdots x_{n}^{b_{n}}.$

\section{Affine cellular bases of affine BMW algebras}

\subsection{Tensor products of affine tangle diagrams}
The category of affine $(k, l)$-tangle diagrams is not a tensor category in any evident fashion. Nevertheless, we can define a tensor product of affine tangle diagrams, as follows. Let $T_{1}$ and $T_{2}$ be affine tangle diagrams (say of size $(a, a)$ and $(b, b)$ respectively), and suppose that $T_{2}$ has no closed loops. Then $T_{1}\odot T_{2}$ is obtained by replacing the flagpole in the affine tangle diagram $T_{2}$ with the entire affine tangle diagram $T_{1}.$ If we regard $T_{1}$ and $T_{2}$ as representing framed tangles in the annulus cross the interval $A\times I,$ then $T_{1}\odot T_{2}$ is obtained by inserting the entire copy of $A\times I$ containing $T_{1}$ into the hole of the copy of $A\times I$ containing $T_{2}.$

Then $T_{1}\otimes T_{2}\mapsto T_{1}\odot T_{2}$ determines a linear map from $\widehat{W}_{a, S}\otimes \widehat{W}_{b, S}$ into $\widehat{W}_{a+b, S}.$ Note that $(T_{1}\odot T_{2})^{*}=T_{1}^{*}\odot T_{2}^{*}.$

These maps of affine BMW algebras are not algebra homomorphism. In fact, we have $$(1\odot e_{1})(1\odot x_{1})(1\odot e_{1})=z\odot e_{1},$$
where $z$ is a (non-scalar) central element in $\widehat{W}_{a, S}.$ Nevertheless, we have $$(A\odot B)(S\odot T)=AS \odot BT,$$
if no closed loops are produced in the product $BT,$ in particular, if at least one of $B$ and $T$ has no horizontal strands.

\subsection{Affine cellular bases}

Using (3.3) and the remarks in Section 4.1, we can rewrite the elements $T_{D}$ in (3.3) in the following form:$$T_{D}=g_{\alpha_{1}}\big(\big[g_{\alpha_{2}}\bm{x}^{a}(e_{1}e_{3}\cdots e_{2f-1})\bm{x}^{c}(g_{\beta_{2}})^{*}\big]\odot \big(g_{\pi}\bm{x}^{b}\big)(g_{\beta_{1}})^{*}. \eqno{(4.1)}$$
Here, $D$ is a $\mathbb{Z}$-Brauer diagram with $s$ vertical strands and $f=(n-s)/2;$ $\alpha_{1}$ and $\beta_{1}$ are $(n-s, s)$-shuffles; $\pi\in \mathfrak{S}_{s}$ and $\bm{x}^{b}=x_{1}^{b_{1}}\cdots x_{s}^{b_{s}}.$ Moreover, $\alpha_{2}$ and $\beta_{2}$ are elements of $\mathcal{D}_{f, f},$ $\bm{x}^{a}=x_{1}^{a_{1}}x_{3}^{a_{3}}\cdots x_{2f-1}^{a_{2f-1}},$ and similarly for $\bm{x}^{c}.$

The affine $(2f, 2f)$-tangle diagram $$T=g_{\alpha_{2}}\bm{x}^{a}(e_{1}e_{3}\cdots e_{2f-1})\bm{x}^{c}(g_{\beta_{2}})^{*}$$
is stratified and flagpole descending in the sense of the definition given in [G1, Definition 3.5], with no vertical strands and no closed loops. Conversely, any stratified and flagpole descending affine $(2f, 2f)$-tangle diagram with no vertical strands and no closed loops is regularly isotopic to one of the form.

Note that we can factor $T$ as $T=xy^{*},$ where $x$ and $y$ are stratified and flagpole descending affine $(0, 2f)$-tangle diagrams with no closed loops, namely$$x=g_{\alpha_{2}}\bm{x}^{a}(\cap_{2f-1}\cdots \cap_{3}\cap_{1}),~~~\mathrm{and}~~~y=g_{\beta_{2}}\bm{x}^{c}(\cap_{2f-1}\cdots \cap_{3}\cap_{1}),$$
where $\cap_{i}$ is the lower half of $e_{i}.$

In fact, any stratified and flagpole descending affine $(0, 2f)$-tangle diagram with no closed loops is regularly isotopic to one of the form.

\begin{lemma} {\rm (see [G1, Lemma~4.1])}
The set of $T_{D}\in \mathbb{U}$ with $s$ vertical strands equals the set of elements $$g_{\alpha}(xy^{*}\odot g_{\pi}\bm{x}^{b})(g_{\beta})^{*},$$
where $x, y$ are stratified and flagpole descending affine $(0, n-s)$-tangle diagrams with no closed loops or self-crossings of strands; $\alpha$ and $\beta$ are $(n-s, s)$-shuffles; $\pi\in \mathfrak{S}_{s}$ and $\bm{x}^{b}=x_{1}^{b_{1}}\cdots x_{s}^{b_{s}}.$
\end{lemma}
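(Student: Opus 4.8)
The plan is to prove the asserted set equality by matching the explicit standard form (4.1) of a basis element $T_D$ against the parametrized family in the statement, using the factorization $T=xy^*$ and the classification of stratified, flagpole descending $(0,n-s)$-tangle diagrams recorded just above the lemma. Throughout I write $f=(n-s)/2$.

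First I would establish the inclusion ``$\subseteq$''. Let $T_D\in \mathbb{U}$ have exactly $s$ vertical strands. Rewriting $T_D$ as in (4.1) gives
$$T_D=g_{\alpha_1}\big(T\odot (g_\pi\bm{x}^b)\big)(g_{\beta_1})^*,$$
where $T=g_{\alpha_2}\bm{x}^a(e_1e_3\cdots e_{2f-1})\bm{x}^c(g_{\beta_2})^*$ is the stratified, flagpole descending affine $(2f,2f)$-tangle with no vertical strands and no closed loops, $\alpha_1,\beta_1$ are $(n-s,s)$-shuffles, $\pi\in\mathfrak{S}_s$, and $\bm{x}^b=x_1^{b_1}\cdots x_s^{b_s}$. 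Inserting the factorization $T=xy^*$ with $x=g_{\alpha_2}\bm{x}^a(\cap_{2f-1}\cdots\cap_1)$ and $y=g_{\beta_2}\bm{x}^c(\cap_{2f-1}\cdots\cap_1)$, and renaming $\alpha_1,\beta_1$ as $\alpha,\beta$, displays $T_D$ in exactly the asserted form; by their explicit shape the diagrams $x,y$ are stratified and flagpole descending affine $(0,2f)=(0,n-s)$-tangle diagrams with no closed loops or self-crossings.

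Next I would establish the reverse inclusion ``$\supseteq$''. Given an element $g_\alpha(xy^*\odot g_\pi\bm{x}^b)(g_\beta)^*$ with the stated data, I would apply the classification recorded immediately before the lemma: every stratified, flagpole descending affine $(0,2f)$-tangle diagram with no closed loops or self-crossings is regularly isotopic to one of the form $g_{\alpha_2}\bm{x}^a(\cap_{2f-1}\cdots\cap_1)$, with $\alpha_2\in\mathcal{D}_{f,f}$ and $\bm{x}^a$ a monomial in the commuting elements $x_1,x_3,\ldots,x_{2f-1}$. Applying this to $x$ and to $y$ recovers $xy^*=T$ in the displayed form, so that $g_\alpha(T\odot g_\pi\bm{x}^b)(g_\beta)^*$ agrees with (4.1); reading off the associated $\mathbb{Z}$-Brauer diagram $D$ via the recipe of Section 3.2 (so that $c(T_D)=D$) then identifies the element as $T_D\in\mathbb{U}$ with $s$ vertical strands.

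The routine part is checking that the reassembly is consistent: no closed loops are created in forming $xy^*$ or in taking $\odot$, so the multiplication rule $(A\odot B)(S\odot T)=AS\odot BT$ applies and the standard form is preserved; this is guaranteed since $x,y$ carry no closed loops and the relevant factors on one side have no horizontal strands. The genuine content, which I would invoke rather than reprove, is the classification of stratified, flagpole descending $(0,2f)$-tangle diagrams together with its uniqueness, so that the assignment $D\leftrightarrow(\alpha,\beta,\pi,\bm{x}^b,x,y)$ is a bijection and the two sets truly coincide rather than merely one containing the other; this rests on [G1, Definition~3.5] and is precisely [G1, Lemma~4.1]. I expect this classification to be the main obstacle, since everything else is bookkeeping built on (4.1) and the factorization $T=xy^*$.
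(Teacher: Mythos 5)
Your proposal is correct and follows essentially the route the paper intends: the paper gives no independent proof of this lemma, citing [G1, Lemma~4.1], and the surrounding text supplies exactly the ingredients you use --- the rewriting (4.1), the factorization $T=xy^{*}$ with $x=g_{\alpha_{2}}\bm{x}^{a}(\cap_{2f-1}\cdots\cap_{3}\cap_{1})$, $y=g_{\beta_{2}}\bm{x}^{c}(\cap_{2f-1}\cdots\cap_{3}\cap_{1})$, and the classification of stratified, flagpole descending $(0,2f)$-tangle diagrams up to regular isotopy. The only minor remark is that the set equality needs just the two inclusions, so the uniqueness/bijectivity of the parametrization $D\leftrightarrow(\alpha,\beta,\pi,\bm{x}^{b},x,y)$ that you invoke is not required here (it matters only later, for the basis statement of Lemma~4.2).
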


For each $s$ with $s\leq n$ and $n-s$ even, let $V_{n}^{s}$ be the span in $\widehat{W}_{n, S}$ of the set of elements $T_{D}\in \mathbb{U}$ with $s$ vertical strands.
\begin{lemma}
For each $s,$ let $\mathbb{B}_{s}$ be a basis of $\widehat{H}_{n, S}(q).$ Let $\Sigma_{s}$ be the set of elements $$g_{\alpha}\big(xy^{*}\odot t(b)\big)(g_{\beta})^{*}\in \widehat{W}_{n, S},$$
such that $x, y$ are stratified and flagpole descending affine $(0, n-s)$-tangle diagrams with no closed loops or self-crossings of strands; $\alpha$ and $\beta$ are $(n-s, s)$-shuffles; and $b\in \mathbb{B}_{s}.$ Then $\Sigma_{s}$ is a basis of $V_{n}^{s}.$
\end{lemma}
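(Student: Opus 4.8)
The plan is to deduce the statement from Lemma 4.1 and Theorem 3.2 by a single change-of-basis argument carried out inside the affine Hecke algebra. First I would record, using Lemma 4.1 together with Theorem 3.2, that the set
$$\mathbb{U}_s=\big\{\,g_{\alpha}\big(xy^{*}\odot g_{\pi}\bm{x}^{b}\big)(g_{\beta})^{*}\,\big\}$$
— with $\alpha,\beta$ the $(n-s,s)$-shuffles, $x,y$ the stratified flagpole-descending $(0,n-s)$-tangle diagrams with no closed loops or self-crossings, $\pi\in\mathfrak{S}_s$, and $\bm{x}^{b}=x_1^{b_1}\cdots x_s^{b_s}$ — is exactly the set of $T_D\in\mathbb{U}$ with $s$ vertical strands, and is therefore an $S$-basis of $V_n^s$. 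The key observation is that the middle factor satisfies $g_\pi\bm{x}^{b}=t(\tau_\pi\bm{t}^{b})$, and that $\{\tau_\pi\bm{t}^{b}:\pi\in\mathfrak{S}_s\}$ is precisely the standard $S$-basis of $\widehat{H}_{s,S}(q)$ recalled in Section 2.4. Hence, writing $i=(\alpha,\beta,x,y)$ for the outer data and setting $\Phi_i(h):=g_\alpha\big(xy^{*}\odot t(h)\big)(g_\beta)^{*}$, the basis $\mathbb{U}_s$ consists exactly of the elements $\Phi_i(h)$ as $i$ ranges over the allowed outer data and $h$ ranges over the standard basis of $\widehat{H}_{s,S}(q)$.

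Next I would verify that each $\Phi_i$ is an $S$-linear map $\widehat{H}_{s,S}(q)\to V_n^s$. This is immediate: the section $t$ is $S$-linear, the operation $B\mapsto xy^{*}\odot B$ is $S$-linear because $\odot$ is induced by the $S$-linear map $\widehat{W}_{n-s,S}\otimes\widehat{W}_{s,S}\to\widehat{W}_{n,S}$ of Section 4.1, and left multiplication by $g_\alpha$ and right multiplication by $(g_\beta)^{*}$ are $S$-linear. With linearity established, the entire statement reduces to linear algebra over $S$.

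Now, since $\mathbb{B}_s$ and the standard basis are two $S$-bases of the same free $S$-module $\widehat{H}_{s,S}(q)$, there is a transition matrix $M=(M_{b,h})$ with entries in $S$, invertible over $S$, such that $b=\sum_{h}M_{b,h}\,h$ for each $b\in\mathbb{B}_s$ (here $h$ runs over the standard basis); in particular $\mathbb{B}_s$ and the standard basis have the same cardinality, so $\Sigma_s$ and $\mathbb{U}_s$ have the same number of elements. Applying $\Phi_i$ and using its $S$-linearity, I would obtain for every $i$ and every $b\in\mathbb{B}_s$
$$g_\alpha\big(xy^{*}\odot t(b)\big)(g_\beta)^{*}=\Phi_i(b)=\sum_{h}M_{b,h}\,\Phi_i(h).$$
The decisive point is that each element of $\Sigma_s$ is expressed only through elements of $\mathbb{U}_s$ carrying the \emph{same} outer data $i$. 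Thus, ordering both index sets by the pair (outer data, Hecke basis element), the transition matrix from $\mathbb{U}_s$ to $\Sigma_s$ is block-diagonal, with one block equal to $M$ for each value of $i$. Being a direct sum of copies of the $S$-invertible matrix $M$, it is invertible over $S$, so $\Sigma_s$ is again an $S$-basis of $V_n^s$.

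I expect the only genuinely delicate point to be the ring-theoretic one in the third paragraph, namely that the two bases of $\widehat{H}_{s,S}(q)$ are related by a matrix invertible \emph{over} $S$ rather than merely over its field of fractions; this is what guarantees that the block-diagonal transition matrix has an inverse with entries in $S$ and hence sends an $S$-basis to an $S$-basis. Everything else — the identification of $\mathbb{U}_s$ via Lemma 4.1, the $S$-linearity of the maps $\Phi_i$, and the block structure of the transition — is routine once these maps are set up.
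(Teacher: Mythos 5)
Your proposal is correct and takes essentially the same approach as the paper: the paper's proof likewise combines Lemma 4.1 with the standard basis $\{\tau_{\pi}\bm{t}^{b}\}$ of the affine Hecke algebra to write $V_{n}^{s}$ as a direct sum, over the outer data $(\alpha,\beta,x,y)$, of the images of the injective $S$-linear maps $u\mapsto g_{\alpha}\big(xy^{*}\odot t(u)\big)(g_{\beta})^{*}$ --- which is exactly your block-diagonal change-of-basis argument stated in different words. Two cosmetic remarks: since the standard basis $\{\tau_{\pi}\bm{t}^{b}\}$ is infinite, your ``invertible transition matrix over $S$'' is better phrased as the automatic fact that an injective $S$-linear map sending one basis of a free module to a basis of its image sends every basis to a basis (so the delicacy you flag in your last paragraph is not actually an issue), and your reference to ``Theorem 3.2'' should read Theorem 3.1.
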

\begin{proof} Recall that $\{\tau_{\pi}\bm{t}^{b}|~\pi\in \mathfrak{S}_{s}~\mathrm{and}~b_{i}\in \mathbb{Z}~\mathrm{for}~1\leq i\leq n\}$ is a basis of $\widehat{H}_{n, S}(q),$ and that $g_{\pi}\bm{x}^{b}=t(\tau_{\pi}\bm{t}^{b}).$ It follows from this and from Lemma 4.1 that $V_{n}^{s}$ is the direct sum over $(\alpha, \beta, x, y)$ of $$V_{n}^{s}(\alpha, \beta, x, y)=\big\{g_{\alpha}\big(xy^{*}\odot t(u)\big)(g_{\beta})^{*}|~u\in \widehat{H}_{n, S}(q)\big\}$$
and that $u\mapsto g_{\alpha}\big(T\odot t(u)\big)(g_{\beta})^{*}$ is injective. This implies the result.
\end{proof}

From [KX2, Theorem 5.7], we know that the extended affine Hecke algebra $\widehat{H}_{s, S}(q)$ of type $A$, for each $s,$ is affine cellular. For each $s$ with $s\leq n$ and $n-s$ even, Let ($*, \Lambda_{s}, \{\mathcal{T}(\lambda)\}_{\lambda\in \Lambda_{s}}, \{B_{s,\lambda}, \sigma_{s, \lambda}\}_{\lambda\in \Lambda_{s}}, \mathcal{C}_{s}=\{c_{s, t}^{\lambda}|~\lambda\in \Lambda_{s}, s,t\in \mathcal{T}(\lambda)\}$) be an affine cellular basis of the affine Hecke algebra $\widehat{H}_{s, S}(q),$ where $\Lambda_{s}$ is the set of all partitions of $s$ under the dominance order $\unlhd;$ the affine $S$-algebra $B_{s,\lambda}$ with a $k$-involution $\sigma_{s, \lambda},$ for each $\lambda\in \Lambda_{s},$ is the representation ring of a product of general linear groups $GL_{t}.$

Let $\Lambda=\{(s, \lambda)|~s\leq n ~\mathrm{and}~ n-s ~\mathrm{even}, \lambda\in \Lambda_{s}\}$ with a partial order $(s, \lambda)\geq (t, \mu)$ if $s< t$ or if $s=t$ and $\lambda\geq \mu$ (i.e. $\lambda\unrhd\mu$) in $\Lambda_{s}.$ For each pair $(s, \lambda)\in \Lambda,$ we take $\mathcal{T}(s, \lambda)$ to be the set of triples $(\alpha, x, u),$ where $\alpha$ is an $(n-s, s)$-shuffle; $x$ is a stratified and flagpole descending affine $(0, n-s)$-tangle diagrams with no closed loops or self-crossings of strands; and $u\in \mathcal{T}(\lambda).$ For any $b\in B_{s, \lambda},$ define $$c_{(\alpha,x,u),(\beta,y,v)}^{(s,\lambda)}\cdot b=g_{\alpha}\big(xy^{*}\odot t(c_{u,v}^{\lambda}\cdot b)\big)(g_{\beta})^{*},$$
and $\mathcal{C}$ to be the set of all $c_{(\alpha,x,u),(\beta,y,v)}^{(s,\lambda)}.$

\begin{lemma}

$(c_{(\alpha,x,u),(\beta,y,v)}^{(s,\lambda)}\cdot b)^{*}\equiv c_{(\beta,y,v), (\alpha,x,u)}^{(s,\lambda)}\cdot\sigma_{s, \lambda}(b)$ ~$\mathrm{mod}$~ $\widehat{W}_{n, S}^{>(s, \lambda)}.$
\end{lemma}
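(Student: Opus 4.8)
The plan is to apply the involution $*$ directly to the defining relation
\[
c_{(\alpha,x,u),(\beta,y,v)}^{(s,\lambda)}\cdot b=g_{\alpha}\big(xy^{*}\odot t(c_{u,v}^{\lambda}\cdot b)\big)(g_{\beta})^{*}
\]
and to reduce everything to a statement about the single factor $t(c_{u,v}^{\lambda}\cdot b)$. Since $*$ is an anti-automorphism it reverses the order of the three factors; using $\big((g_{\beta})^{*}\big)^{*}=g_{\beta}$, the identity $(T_{1}\odot T_{2})^{*}=T_{1}^{*}\odot T_{2}^{*}$ from Section 4.1, and $(xy^{*})^{*}=yx^{*}$, I would obtain
\[
\big(c_{(\alpha,x,u),(\beta,y,v)}^{(s,\lambda)}\cdot b\big)^{*}=g_{\beta}\Big(yx^{*}\odot\big(t(c_{u,v}^{\lambda}\cdot b)\big)^{*}\Big)(g_{\alpha})^{*}.
\]
On the other hand, unwinding the definition of the right-hand side of the Lemma gives $c_{(\beta,y,v),(\alpha,x,u)}^{(s,\lambda)}\cdot\sigma_{s,\lambda}(b)=g_{\beta}\big(yx^{*}\odot t(c_{v,u}^{\lambda}\cdot\sigma_{s,\lambda}(b))\big)(g_{\alpha})^{*}$. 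Thus the whole claim is reduced to understanding $\big(t(c_{u,v}^{\lambda}\cdot b)\big)^{*}$ up to a controlled error, and then transporting that error through the linear map $f(w):=g_{\beta}\big(yx^{*}\odot w\big)(g_{\alpha})^{*}$ defined on $\widehat{W}_{s,S}$.

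Next I would combine two inputs at the level of the $s$-strand algebras. By the $s$-strand analogue of the section property recorded in Section 2.4, one has $t(h)^{*}\equiv t(h^{*})$ modulo the rank ideal $I_{s}\subset\widehat{W}_{s,S}$; and by the affine cellularity condition (c) for $\widehat{H}_{s,S}(q)$, one has $(c_{u,v}^{\lambda}\cdot b)^{*}\equiv c_{v,u}^{\lambda}\cdot\sigma_{s,\lambda}(b)$ modulo $\widehat{H}_{s,S}^{>\lambda}$. Feeding the second congruence into the first yields
\[
\big(t(c_{u,v}^{\lambda}\cdot b)\big)^{*}\equiv t\big(c_{v,u}^{\lambda}\cdot\sigma_{s,\lambda}(b)\big)
\]
modulo $I_{s}+t(\widehat{H}_{s,S}^{>\lambda})$. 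Applying $f$ to both sides, the main term becomes exactly $c_{(\beta,y,v),(\alpha,x,u)}^{(s,\lambda)}\cdot\sigma_{s,\lambda}(b)$, so the Lemma follows once I verify the containment $f\big(I_{s}+t(\widehat{H}_{s,S}^{>\lambda})\big)\subseteq\widehat{W}_{n,S}^{>(s,\lambda)}$.

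The hard part will be precisely this containment, where the rank filtration of $\widehat{W}_{s,S}$ and the cellular filtration of $\widehat{H}_{s,S}(q)$ must be matched against the combined partial order on $\Lambda$. For the term coming from $\widehat{H}_{s,S}^{>\lambda}$, applying $f$ to $t(c_{u',v'}^{\lambda'}\cdot b')$ with $\lambda'>\lambda$ returns, by the very definition of $\mathcal{C}$, the basis element $c_{(\beta,y,u'),(\alpha,x,v')}^{(s,\lambda')}\cdot b'$; this has $s$ vertical strands and label $\lambda'>\lambda$, hence lies in $\widehat{W}_{n,S}^{>(s,\lambda)}$ because $(s,\lambda')>(s,\lambda)$. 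For the term coming from $I_{s}$, an element of $I_{s}$ has rank strictly less than $s$; since $yx^{*}$ is built from the half-diagrams $\cap_{i}$ and so has no vertical strand, $\odot$-ing with it does not change the number of through-strands, and conjugating by the shuffle braids $g_{\alpha},(g_{\beta})^{*}$ preserves that number as well, so $f$ sends it to an element with strictly fewer than $s$ vertical strands. By the definition of the order, $s'<s$ forces $(s',\mu)>(s,\lambda)$, so this term also lands in $\widehat{W}_{n,S}^{>(s,\lambda)}$.

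I expect the genuinely delicate points to be two. First, getting the direction of the partial order right, since here \emph{fewer} vertical strands means \emph{larger} in $\Lambda$, so that the rank-lowering error from $I_{s}$ is correctly absorbed into $\widehat{W}_{n,S}^{>(s,\lambda)}$ rather than escaping it. Second, the careful bookkeeping of through-strands under $\odot$, which is clean here only because $yx^{*}$ has no vertical strands, so that no closed loops are created and $\odot$ behaves additively on through-strands. Once these are in place, both error pieces are absorbed and the stated congruence follows.
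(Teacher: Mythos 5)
Your proof is correct and follows essentially the same route as the paper's: apply the anti-automorphism $*$ together with $(T_{1}\odot T_{2})^{*}=T_{1}^{*}\odot T_{2}^{*}$ to reduce to the single factor $t(c_{u,v}^{\lambda}\cdot b)^{*}$, then invoke the section property $t(h)^{*}\equiv t(h^{*})$ modulo the rank ideal $I_{s}$ and axiom (c) for the affine cellular basis of $\widehat{H}_{s,S}(q)$. The only difference is one of care rather than of method: the paper writes $(c_{u,v}^{\lambda}\cdot b)^{*}=c_{v,u}^{\lambda}\cdot\sigma_{s,\lambda}(b)$ as an equality and records only the rank-$<s$ error, whereas you correctly treat axiom (c) as a congruence modulo $\widehat{H}_{s,S}^{>\lambda}$ (as required by the paper's own weakened Definition 2.4) and explicitly verify that both error terms are absorbed, checking that $f\big(t(\widehat{H}_{s,S}^{>\lambda})\big)$ lands in the pieces $\widehat{W}^{(s,\lambda')}$ with $\lambda'\rhd\lambda$ and that the rank-lowering error from $I_{s}$ lands in $\bigoplus_{s'<s}V_{n}^{s'}$, both contained in $\widehat{W}_{n,S}^{>(s,\lambda)}$ by the stated order on $\Lambda$.
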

\begin{proof}
$\big(g_{\alpha}\big(xy^{*}\odot t(c_{u,v}^{\lambda}\cdot b)\big)(g_{\beta}\big)^{*})^{*}=g_{\beta}\big(yx^{*}\odot t(c_{u,v}^{\lambda}\cdot b)^{*}\big)(g_{\alpha})^{*},$ and $t(c_{u,v}^{\lambda}\cdot b)^{*}\equiv t((c_{u,v}^{\lambda}\cdot b)^{*})=t(c_{v,u}^{\lambda}\cdot \sigma_{s, \lambda}(b))$ modulo the span of diagrams of rank $< s.$ Hence $(c_{(\alpha,x,u),(\beta,y,v)}^{(s,\lambda)}\cdot b)^{*}\equiv c_{(\beta,y,v), (\alpha,x,u)}^{(s,\lambda)}\cdot\sigma_{s, \lambda}(b)$ modulo the span of diagrams of rank $< s.$
\end{proof}

\begin{lemma}
For any affine $(n-s, n-s)$-tangle diagram $A$ and affine $(s, s)$-tangle diagram $B,$ and for any $b\in B_{\lambda},$ $(A\odot B)(xy^{*}\odot t(c_{u,v}^{\lambda}\cdot b))$ can be written as a linear combination of elements $(x'y^{*}\odot t(c_{u,v}^{\lambda}\cdot b'b)),$ modulo $\widehat{W}_{n, S}^{>(s, \lambda)}$ with $b'$ independent of $y$ and $v.$
\end{lemma}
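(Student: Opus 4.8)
The plan is to separate the two tensor slots, treat the inner ($\widehat{W}_{n-s,S}$) factor by the diagrammatics of [G1] and the outer slot by the affine cellularity of $\widehat{H}_{s,S}(q)$, and then reassemble while tracking which error terms fall into $\widehat{W}_{n,S}^{>(s,\lambda)}$. The first move is to invoke multiplicativity. Since $t(c_{u,v}^{\lambda}\cdot b)$ lies in the image of the linear section $t$, it is a combination of diagrams $g_{\pi}\bm{x}^{c}$, each of which has only vertical strands; in particular the right-hand tensor factor has no horizontal strands. Hence the product rule $(A\odot B)(S\odot T)=AS\odot BT$ from Section~4.1 applies with $S=xy^{*}$ and $T=t(c_{u,v}^{\lambda}\cdot b)$, giving
\[
(A\odot B)(xy^{*}\odot t(c_{u,v}^{\lambda}\cdot b))=(A\cdot xy^{*})\odot\big(B\cdot t(c_{u,v}^{\lambda}\cdot b)\big).
\]

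Next I would analyse the inner factor $A\cdot xy^{*}$. As an affine $(n-s,n-s)$-tangle, $xy^{*}$ has no through-strands, so its rank is $0$; since rank is submultiplicative, $A\cdot xy^{*}$ again has rank $0$. Geometrically $A$ meets only the cup part $x$ and leaves the cap part $y^{*}$ untouched, and any closed loops created are ordinary loops in $\widehat{W}_{n-s,S}$, evaluated by the free-loop relations to scalars $\rho^{-j}\delta_{j}$ (this is exactly where the no-horizontal-strands hypothesis blocks the non-scalar central elements warned about in Section~4.1). Using the normal form for stratified, flagpole-descending $(0,n-s)$-tangles from [G1], I would expand $A\cdot xy^{*}=\sum_{i}\kappa_{i}\,x_{i}'\,y^{*}$, where $\kappa_{i}\in S$, each $x_{i}'$ is stratified flagpole-descending with no closed loops or self-crossings, and $y^{*}$ is unchanged; crucially the data $\kappa_{i},x_{i}'$ depend only on $A$ and $x$, never on $y$ or $v$.

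For the outer factor I would pass to the Hecke quotient. Because $p\circ t=\mathrm{id}$ and $p$ respects products modulo the rank ideal $I_{s}$, one has $B\cdot t(c_{u,v}^{\lambda}\cdot b)\equiv t\big(p(B)\cdot(c_{u,v}^{\lambda}\cdot b)\big)$ modulo diagrams of rank $<s$. The affine cellularity of $\widehat{H}_{s,S}(q)$ (Definition~2.4(b)) then expands $p(B)\cdot(c_{u,v}^{\lambda}\cdot b)\equiv\sum_{w}c_{w,v}^{\lambda}\cdot r_{w}b$ modulo $\widehat{H}_{s,S}^{>\lambda}$, with coefficients $r_{w}=r_{w}(p(B))\in B_{\lambda}$ independent of $v$; applying $t$ gives $B\cdot t(c_{u,v}^{\lambda}\cdot b)\equiv\sum_{w}t(c_{w,v}^{\lambda}\cdot r_{w}b)$ modulo rank $<s$. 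Substituting the two expansions and using bilinearity of $\odot$ yields $\sum_{i,w}\kappa_{i}\,\big(x_{i}'y^{*}\odot t(c_{w,v}^{\lambda}\cdot r_{w}b)\big)$, the asserted combination with $b'=\kappa_{i}r_{w}$ (the first Hecke-index passing from $u$ to $w$ while $v$ stays fixed) manifestly independent of $y$ and $v$. The error terms are controlled by rank: $\odot$-ing the rank-$0$ inner factor against the rank-$<s$ remainder produces diagrams with fewer than $s$ through-strands, hence elements of $\widehat{W}_{n,S}^{>(s,\lambda)}$, and the higher-$\lambda$ Hecke remainder likewise lands in $\widehat{W}_{n,S}^{>(s,\lambda)}$ after the tensor.

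I expect the main obstacle to be the inner-factor analysis: showing that $A\cdot xy^{*}$ keeps $y^{*}$ intact and reduces, with \emph{scalar} coefficients, to a combination of stratified flagpole-descending diagrams, while certifying that every closed loop produced there is a genuine scalar rather than one of the central elements $z$ of Section~4.1. Securing this is precisely what delivers the independence of the coefficients from $y$ and $v$, which is the only nontrivial assertion in the lemma; the multiplicativity split and the Hecke expansion are then routine, and the rank bookkeeping only needs the submultiplicativity of rank together with $\mathrm{rank}(xy^{*})=0$.
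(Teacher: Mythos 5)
Your proposal is correct and follows essentially the same route as the paper's proof: split the product via the multiplicativity of $\odot$ (licit because $t(c_{u,v}^{\lambda}\cdot b)$ has only vertical strands), reduce $Ax$ to a scalar linear combination of stratified, flagpole-descending $(0,n-s)$-tangles by the algorithm of [GH, Propositions 2.18 and 2.19] without touching $y^{*}$, and finish on the second slot with the affine cellularity of $\widehat{H}_{s,S}(q)$ together with rank bookkeeping for the error terms. Your only departure is cosmetic: where the paper treats $B$ by cases (basis elements of rank $<s$ versus $B=g_{\sigma}\bm{x}^{b}$), you write $B\equiv t(p(B))$ modulo $I_{s}$ and apply cellularity to $p(B)$, which is the same mechanism in a slightly slicker wrapper.
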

\begin{proof}
We have $(A\odot B)(xy^{*}\odot t(\tau_{\pi}\bm{t}^{b}))=Axy^{*}\odot Bt(\tau_{\pi}\bm{t}^{b}),$ because $t(\tau_{\pi}\bm{t}^{b})$ has only vertical strands. Therefore, also $(A\odot B)(xy^{*}\odot t(c_{u,v}^{\lambda}\cdot b))=Axy^{*}\odot Bt(c_{u,v}^{\lambda}\cdot b).$

Note that $Ax$ is an affine $(0, n-s)$-tangle, and can be reduced using the algorithm of the proof of Proposition 2.18 and 2.19 in [GH] to a linear combination of stratified, flagpole descending $(0, n-s)$-tangle $x'$ without closed loops. The process does not affect $y^{*}.$

If $B$ has rank strictly less than $s,$ then the product $(A\odot B)(xy^{*}\odot t(c_{u,v}^{\lambda}\cdot b))$ is a linear combination of basis elements $T_{D}$ with fewer than $s$ vertical strands, so belong to $\widehat{W}_{n, S}^{>(s, \lambda)}.$

Otherwise, we can suppose that $B=g_{\sigma}\bm{x}^{b}.$ Then $Bt(c_{u,v}^{\lambda}\cdot b)=t(\tau_{\sigma}\bm{t}^{b})t(c_{u,v}^{\lambda}\cdot b)\equiv t(\tau_{\sigma}\bm{t}^{b}c_{u,v}^{\lambda}\cdot b)$ modulo the span of diagrams with fewer than $s$ vertical strands. Moreover, $t(\tau_{\sigma}\bm{t}^{b}c_{u,v}^{\lambda}\cdot b)$ is a linear combination of elements $t(c_{u',v}^{\lambda}\cdot b'b),$ modulo $t(\widehat{H}_{s, S}^{>\lambda}),$ with $b'$ independent of $v,$ by the affine cellularity of the basis $\mathcal{C}_{s}$ of $\widehat{H}_{s, S}.$
\end{proof}

\begin{theorem}
The affine BMW algebra $\widehat{W}_{n, S}$ over a weakly admissible noetherian domain $S$ is an affine cellular algebra with an affine cellular basis $(*, \Lambda, \{\mathcal{T}(s, \lambda)\}_{(s,\lambda)\in \Lambda}, \{B_{s,\lambda}, \sigma_{s, \lambda}\}_{(s,\lambda)\in \Lambda}, \mathcal{C}=\{c_{(\alpha,x,u),(\beta,y,v)}^{(s,\lambda)}\}).$
\end{theorem}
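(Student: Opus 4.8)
The plan is to verify the three conditions of Definition 2.4 for the data $(*, \Lambda, \{\mathcal{T}(s,\lambda)\}, \{B_{s,\lambda}, \sigma_{s,\lambda}\}, \mathcal{C})$ assembled above. The basis property (condition (a)) should follow almost immediately from the preparatory lemmas: by Lemma 4.2 the set $\Sigma_s$ is a basis of $V_n^s$, and since the $c_{u,v}^\lambda \cdot b$ with $b$ ranging over a $B_\lambda$-basis give a basis of $\widehat{H}_{s,S}(q)$, substituting this into the description of $\Sigma_s$ shows that $\{c_{(\alpha,x,u),(\beta,y,v)}^{(s,\lambda)}\}$ is a right $B_{s,\lambda}$-basis of $\widehat{A}^{(s,\lambda)}$. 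The decomposition $\widehat{W}_{n,S} = \bigoplus_s V_n^s = \bigoplus_{(s,\lambda)} \widehat{A}^{(s,\lambda)}$ then follows from Theorem 3.2 (the basis $\mathbb{U}$) together with the affine cellular decomposition of each $\widehat{H}_{s,S}(q)$.

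The involution condition (c) is exactly Lemma 4.3, so nothing further is needed there. The multiplicative condition (b) is the substantive one, and I would deduce it from Lemma 4.4. First I would reduce to the case where the left multiplier $a$ is a single basis element $T_{D'}$, and write $T_{D'} = g_{\alpha'}(A' \odot B')(g_{\beta'})^*$ in the form of Lemma 4.1, so that $a \cdot (c_{(\alpha,x,u),(\beta,y,v)}^{(s,\lambda)} \cdot b)$ becomes a product of two expressions each of the shape $g_\bullet(\,\cdot \odot \cdot\,)(g_\bullet)^*$. The goal is to collapse the middle factors $(g_{\beta'})^* g_\alpha$ and combine the two $\odot$-tensor layers into a single one of the form $A \odot B$, after which Lemma 4.4 rewrites the whole thing modulo $\widehat{W}_{n,S}^{>(s,\lambda)}$ as a linear combination of $c_{(\alpha'',x',u'),(\beta,y,v)}^{(s,\lambda)} \cdot r b$ with coefficients $r \in B_{s,\lambda}$ independent of the right index $(\beta,y,v)$ — which is precisely the required congruence.

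The main obstacle I expect is the bookkeeping needed to bring the product into the form $(A \odot B)(xy^* \odot t(c_{u,v}^\lambda \cdot b))$ to which Lemma 4.4 applies. The tensor-product map $\odot$ is not an algebra homomorphism in general, so one must justify at each stage that the identity $(A \odot B)(S \odot T) = AS \odot BT$ is applicable, i.e.\ that no closed loops are created in the relevant products; the remarks in Section 4.1 guarantee this when at least one factor has no horizontal strands, and I would invoke that repeatedly. A second delicate point is that the shuffle factors $g_{\alpha'}, (g_{\beta'})^*$ and $g_\alpha$ interact with the stratified tangles $x,y$ and must be absorbed into new $(n-s,s)$-shuffle and tangle data; any discrepancy in rank drops the term into a strictly higher ideal $\widehat{W}_{n,S}^{>(s,\lambda)}$, which is harmless. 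Once the product is in the correct shape, the independence of the coefficients $r_v^s(a)$ from $t$ is inherited directly from the corresponding independence statement in Lemma 4.4, which in turn rests on the affine cellularity of $\widehat{H}_{s,S}(q)$ established in [KX2]. Assembling these observations completes the verification of (a), (b), (c), and hence the theorem.
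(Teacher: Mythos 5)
Your treatment of conditions (a) and (c) matches the paper: (a) follows from Lemma 4.2 together with the basis theorem (which is Theorem 3.1 in the paper, not 3.2) and the affine cellular decomposition of each $\widehat{H}_{s, S}(q)$, and (c) is exactly Lemma 4.3. The divergence, and the gap, is in condition (b). The paper reduces the left multiplier $w$ to a \emph{single generator} $e_{i}$, $g_{i}$, or $x_{1}$ --- legitimate because the congruence in (b) is stable under composition of left multipliers --- and then adapts Goodman's proof of [G1, Theorem 4.5], applying Lemma 4.4 repeatedly. You instead reduce $a$ to a basis element $T_{D'}=g_{\alpha'}(A'\odot B')(g_{\beta'})^{*}$ and propose to ``collapse the middle factors $(g_{\beta'})^{*}g_{\alpha}$ and combine the two $\odot$-tensor layers into a single one of the form $A\odot B$.'' This step is not bookkeeping and, as stated, fails: $(g_{\beta'})^{*}g_{\alpha}$ is a product of braid generators that mixes the first $n-s$ strands with the last $s$ strands, so it does not factor as $A''\odot B''$, and consequently the two $\odot$-layers cannot simply be merged. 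Your fallback --- that any discrepancy in rank drops the term into $\widehat{W}_{n, S}^{>(s,\lambda)}$ --- disposes only of the rank-dropping terms; the rank-preserving terms still mix the two strand groups and require a genuine straightening argument showing that the product can be rewritten, modulo lower rank, as a combination of $g_{\alpha''}(A\odot B)$ with $\alpha''$ again an $(n-s,s)$-shuffle, before Lemma 4.4 can be invoked at all.

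That straightening is precisely the content of Goodman's argument, and it is tractable exactly because $w$ is a single generator: one commutes one $e_{i}$, $g_{i}$ or $x_{1}$ past the shuffle $g_{\alpha}$ using the defining relations, case by case. Your reduction to basis elements $T_{D'}$ buys nothing structurally --- each $T_{D'}$ is still a long word in the generators, so carrying out your plan would force you back to the generator-by-generator analysis anyway, now with extra layers of shuffles and $\odot$-factors to track. To repair the proposal, replace the reduction ``to a basis element'' by the reduction ``to a generator $e_{i}$, $g_{i}$, $x_{1}$,'' and then either cite the adaptation of [G1, Theorem 4.5] (as the paper does) or carry out that case analysis in combination with Lemma 4.4; the independence of the coefficients from $(\beta, y, v)$ then does come out of Lemma 4.4 as you say, since it rests on the affine cellularity of $\widehat{H}_{s, S}(q)$ from [KX2].
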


\begin{proof}
For each $(s, \lambda)\in \Lambda,$ let $\widehat{W}^{(s,\lambda)}$ be the right $B_{\lambda}$-span of the basis element $c_{(\alpha,x,u),(\beta,y,v)}^{(s,\lambda)}$ for $(\alpha,x,u),(\beta,y,v)\in \mathcal{T}(s, \lambda).$ It follows from (the proof of) Lemma 4.2 that the set $\{c_{(\alpha,x,u),(\beta,y,v)}^{(s,\lambda)}\}$ is a right $B_{\lambda}$-basis of $\widehat{W}^{(s,\lambda)}.$ Theorem 3.1 and Lemma 4.2 implies that $\widehat{W}_{n, S}=\bigoplus_{(s,\lambda)\in \Lambda} \widehat{W}^{(s,\lambda)},$ and property (c) holds by Lemma 4.3. It remains to verify axiom (b) for affine cellular bases. Thus we have to show that for $w\in \widehat{W}_{n, S},$ and for an element $c_{(\alpha,x,u),(\beta,y,v)}^{(s,\lambda)}\cdot b=g_{\alpha}\big(xy^{*}\odot t(c_{u,v}^{\lambda}\cdot b)\big)(g_{\beta})^{*}$ $(b\in B_{\lambda})$, the product $$w\cdot g_{\alpha}\big(xy^{*}\odot t(c_{u,v}^{\lambda}\cdot b)\big)(g_{\beta})^{*}\eqno{(4.2)}$$
can be written as a linear combination of elements $$g_{\alpha'}\big(x'y^{*}\odot t(c_{u',v}^{\lambda}\cdot b'b)\big)(g_{\beta})^{*},$$
modulo $\widehat{W}_{n, S}^{>(s, \lambda)}$ with $b'$ independent of $(\beta, y, v).$

It suffices to consider products as in (4.2) with $w$ equal to $e_{i}$ or to $g_{i}$ for some $i,$ or to $w=x_{1}.$ Then the proof is similar to the proof of [G1, Theorem 4.5], the proof that he has given there can be easily adapted to give a proof of this theorem repeatedly using Lemma 4.4, we will omit the details.
\end{proof}

Let now $A$ be an affine cellular algebra with a cell chain $0=J_{0}\subset J_{1}\subset J_{2}\subset\cdots\subset J_{n}=A$, such that each subquotient $J_{i}/J_{i-1}$ is an affine cell ideal of $A/J_{i-1}$. Then $J_{i}/J_{i-1}$ is isomorphic to $\mathbb{A}(V_{i},B_{i},\varphi_{i})$ for some free $k$-module $V_{i}$ of finite rank, an affine $k$-algebra $B_{i}$ and a $k$-bilinear form $\varphi_{i} :V_{i}\otimes_{k} V_{i}\rightarrow B_{i}.$ Let $(\varphi_{st}^{i})$ be the matrix representing the bilinear form $\varphi_{i}$ with respect to some choices of basis of $V_{i}.$ Then Koenig and Xi obtain a parameterisation of simple modules over an affine cellular algebra by establishing a bijection between isomorphism classes of simple $A$-modules and the set $$\{(j, m)| 1\leq j\leq n, m\in \mathrm{MaxSpec(B_{j})~ such~ that~ some} ~\varphi_{st}^{i}\notin \mathrm{m}\},$$
where $\mathrm{MaxSpec(B_{j})}$ denotes the maximal ideal spectrum of $B_{j}.$ \vskip3mm

\hspace*{-0.5cm}$\mathbf{Remark ~4.1.}$ We conjecture that affine BMW algebras are affine cellular in the sense of Koenig and Xi. Presumably the methods in [Yu] can be adapted to give an affine cellular basis of affine BMW algebras in the sense of Koenig and Xi. The key point is to construct an appropriate affine cellular basis for the extended affine Hecke algebra of type $A,$ which is lifted by $t$ to a set of $\widehat{W}_{n, S}$ which is still compatible with the anti-involution $*$ on $\widehat{W}_{n, S}.$

\vskip3mm Acknowledgements. I would like to express my sincere gratitude to Professor Goodman for very helpful correspondence, for giving very useful comments on affine cellular bases, and for providing me with the figures in his paper [G1] and allowing me to use them. I would also like to express my sincere gratitude to Professor Hebing Rui for very helpful correspondence and comments on this paper, from which I have learnt a lot.



Mathematical Sciences Center, Tsinghua University, Jin Chun Yuan West Building

Beijing, 100084, P. R. China.

E-mail address: cwdeng@amss.ac.cn

\end{document}